\definecolor {refcol}{RGB}{40,0,255}
\newfont{\footsc}{cmcsc10 at 8truept}
\newfont{\footbf}{cmbx10 at 8truept}
\newfont{\footrm}{cmr10 at 10truept}
\newtheorem{theorem}{Theorem}[section]
\numberwithin{equation}{section}
\newtheorem{conjecture}{Conjecture}
\newtheorem{corollary}{Corollary}[section]
\newtheorem{example}{Example}
\newtheorem{remark}{Remark}
\newenvironment{proof}[1][Proof]{\noindent{\textbf {#1}}}  {\hfill$\Box$\bigskip}
\begin{document}
\title{\textbf{On the Randi\'c energy of caterpillar graphs}}
\author{Domingos M. Cardoso\thanks{Center for Research and Development in Mathematics and Applications, Department of Mathematics, Universidade de  Aveiro, Aveiro, Portugal.}, Paula Carvalho\thanks{Department of Mathematics, Universidade de  Aveiro, Aveiro, Portugal.}, Roberto C. D\'{i}az\thanks{Departamento de Matematicas, Universidad de La Serena, Cisternas 1200, La Serena, Chile.}, Paula Rama\thanks{Department of Mathematics, Universidade de  Aveiro, Aveiro, Portugal.}}

\date{}
\maketitle

\begin{abstract}
A caterpillar graph $T(p_1, \ldots, p_r)$ of order $n= r+\sum_{i=1}^r p_i$, $r\geq 2$, is a tree such that removing all its pendent vertices gives rise to a path of order $r$. In this paper we establish a necessary and sufficient condition for a real number to be an eigenvalue of the Randi\'c matrix of $T(p_1, \ldots, p_r)$. This result is applied to determine the extremal  caterpillars for the Randi\'c energy of $T(p_1,\ldots, p_r)$  for cases $r=2$ (the double star) and $r=3$. We characterize the extremal  caterpillars  for $r=2$. Moreover, we study the family of caterpillars $T\big(p,n-p-q-3,q\big)$ of order $n$, where $q$ is a function of $p$, and we characterize the extremal  caterpillars  for three cases: $q=p$, $q=n-p-b-3$ and $q=b$, for $b\in \{1,\ldots,n-6\}$ fixed. Some illustrative examples are included.
\end{abstract}

\textbf{AMS classification:} \textit{05C50, 15A18}

\textbf{Keywords:} \textit{Randi\'{c} matrix, Caterpillars graphs, Energy of graphs, Randi\'c energy.}

\section{Introduction}
It is worth to start this section defining the Randi\'{c} matrix of a graph $G$, denoted by $R_{G} =\left( r_{ij}\right)$, which is such that $r_{ij} = \frac{1}{\sqrt{d_id_j}}$ if $ij \in E(G)$ and zero otherwise, where $d_k$ is the degree of the vertex $k$. The spectrum of $R_G$ is the multiset of its eigenvalues, $\sigma_R(G) =\{\rho_1^{[m_1]} , \rho_2^{[m_2]}, \ldots, \rho_s^{[m_s]}\} $, where $m_i$ stands for the multiplicity of $\rho_i$, for $1 \le i \le s$, and $\rho_1 > \rho_2 > \cdots > \rho_s$ are the distinct eigenvalues of $R_{G}$.\\

It is well known that $\rho_1(G)=1$ whenever $G$ is a graph with at least one edge (see \cite[Th. 2.3]{GutmanFurtulaBozkurt14}).

The \textit{Randi\'c energy} of a graph $G$ is defined in \cite{GutmanFurtulaBozkurt14} (see also
\cite{BozkurtGungorGutman10-A, BozkurtGungorGutman10-B}) as follows:
\begin{equation*} \label{renergy}
RE(G) = \sum_{i=1}^{n}{|\rho_i(G)|}.
\end{equation*}
It is immediate that $RE(G)=0$ if and only if all the vertices of $G$ are isolated vertices.\\
Considering $\lambda_1 \ge \lambda_2 \ge \cdots \ge \lambda_n$ as the eigenvalues of the adjacency matrix of a graph $G$ of order $n$, the ordinary energy of $G$ \cite{Gutman78, LiShiGutman2012}, herein denoted by $\mathcal{E}(G)$, is defined as
$$
\mathcal{E}(G) = \sum_{j=1}^{n}{|\lambda_i|}.
$$
In \cite{GutmanFurtulaBozkurt14}, the Randi\'c energy and the ordinary energy of the paths $P_n$ and $P_{n-2}$, respectively, are related as follows.
$$
RE(P_n) = 2 + \frac{1}{2}\mathcal{E}(P_{n-2}).
$$

According to \cite{CaversFallatKirkland10}, if a graph $G$ of order $n$ has at least one edge, then
\begin{equation}
2 \le RE(G) \le n. \label{lower_and_upper_bound}
\end{equation}

Furthermore, the lower bound in \eqref{lower_and_upper_bound} is attained if and only if one component of $G$ is a complete multipartite graph and all other components (if any) are isolated vertices. In particular, $RE(G)=2$ for complete graphs. The upper bound in \eqref{lower_and_upper_bound} is attained only if $n$ is even and $G$ is isomorphic to $\frac{n}{2}K_2$, or $n$ is odd and $G$ is the disjoin union of $\frac{n-3}{2}$ $K_2$ plus a component which is a path $P_2$ or a triangle $K_3$.

The characterization of connected graphs with maximal Randi\'c energy remains an open problem as well as the following conjecture posed in \cite{GutmanFurtulaBozkurt14} and  computationally verified for graphs of order $n$ up to $n=10$.
\begin{conjecture}\cite{GutmanFurtulaBozkurt14}\label{MaximalEnergyConjecture1}
	The connected graph with maximal Randi\'c energy is a tree.
\end{conjecture}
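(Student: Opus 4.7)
The plan is to proceed by contradiction: assume that a connected graph $G$ of order $n$ attains the maximum of $RE$ over connected graphs of order $n$, and that $G$ contains a cycle. The aim is to exhibit a connected graph $G'$ of the same order with $RE(G') \geq RE(G)$, strict enough to force $G$ to be acyclic. The natural candidate is $G' = G - e$ for an edge $e$ lying on some cycle (so $G-e$ is still connected). Writing $L_G = I - R_G$ for the normalized Laplacian, one has $RE(G) = \sum_{i=1}^n |1 - \mu_i(G)|$ with $\mu_i(G) \in [0,2]$, which is the cleanest functional form for eigenvalue comparisons between $G$ and $G-e$.

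The main machinery I would assemble is a Coulson-type integral representation of $RE(G)$ in terms of the Randi\'c characteristic polynomial $\varphi_G(x) = \det(xI - R_G)$. Such a formula reduces the inequality $RE(G-e) \geq RE(G)$ to a coefficient-wise (or modulus-wise) comparison of $\varphi_G$ and $\varphi_{G-e}$ along the imaginary axis. In parallel I would try to adapt the Sachs-type combinatorial interpretation of the coefficients of $\varphi_G$ to track how much each elementary subgraph contributes when a single edge on a cycle is removed, in order to isolate the sign of $\varphi_G(ix) - \varphi_{G-e}(ix)$.

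The central obstacle -- and the reason the conjecture has resisted proof -- is that deleting an edge in the adjacency matrix is a rank-$2$ perturbation, whereas deleting an edge in the Randi\'c matrix is a \emph{global} perturbation: the degrees of both endpoints of $e$ drop by one, so every matrix entry incident to either endpoint is rescaled by factors of the form $\sqrt{d/(d-1)}$. Hence there is no clean edge-deletion identity for $\varphi_G$ at the Randi\'c level, and standard interlacing arguments fail. A possible route around this is to introduce a one-parameter family of weighted Randi\'c matrices $R_G(t)$ that continuously interpolates between $R_G$ (at $t=1$) and $R_{G-e}$ (at $t=0$), carrying the degree rescaling smoothly along with the edge weight, and to show that the corresponding energy function $t \mapsto RE_t$ is monotone on $[0,1]$ via a derivative estimate using the Hellmann--Feynman-type formula for $d|\rho_i(t)|/dt$.

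Even with a successful reduction to trees, identifying the extremal tree is a separate problem; the computational evidence up to $n=10$ cited from \cite{GutmanFurtulaBozkurt14}, together with the caterpillar analysis developed later in this paper, suggests that the extremal tree should itself be a caterpillar with a very specific degree distribution, which would be the natural next step after the tree reduction is established. I expect the degree-rescaling control in the homotopy step to be the truly hard part of the argument.
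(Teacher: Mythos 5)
The statement you are addressing is not a theorem of the paper at all: it is Conjecture~\ref{MaximalEnergyConjecture1}, which the authors explicitly present as an open problem, supported only by the computational verification for orders up to $n=10$ reported in \cite{GutmanFurtulaBozkurt14}. There is no proof in the paper to compare yours against, and nothing in the paper's subsequent caterpillar analysis bears on it.

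More importantly, what you have written is a research programme rather than a proof, and its central step is left entirely unestablished --- indeed it is essentially a restatement of the conjecture. Your strategy hinges on showing $RE(G-e)\ge RE(G)$ (with enough strictness to rule out cycles) for an edge $e$ lying on a cycle, either via a Coulson-type integral together with a comparison of $\varphi_G$ and $\varphi_{G-e}$ on the imaginary axis, or via monotonicity of $t\mapsto RE_t$ along an interpolating family $R_G(t)$. You correctly diagnose why the standard quasi-order machinery fails: deleting an edge rescales every entry of $R_G$ incident to either endpoint by factors of the form $\sqrt{d/(d-1)}$, so there is no Sachs-type edge-deletion recursion for $\det(xI-R_G)$ and no interlacing between the two spectra. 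But you then simply posit that a Hellmann--Feynman estimate will give the required sign of $dRE_t/dt$ without any argument; since $\sum_i\rho_i(t)=0$ for all $t$, that derivative is $\sum_i \operatorname{sgn}\bigl(\rho_i(t)\bigr)\,x_i^{\top}R_G'(t)\,x_i$, and controlling its sign uniformly in $t$ is precisely the open difficulty --- you concede as much when you call it ``the truly hard part.'' A proposal that stops at naming the hard part has not closed the gap; as it stands, the conjecture remains open and your argument does not establish it. If you want to work on something the paper actually proves, Theorem~\ref{novo} or the results of Section~\ref{extremal_caterpillars} would be a better target for this kind of effort.
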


The following more thinner conjecture, also posed in \cite{GutmanFurtulaBozkurt14}, remains open too.

\begin{conjecture}\cite{GutmanFurtulaBozkurt14}\label{MaximalEnergyConjecture2}
	The connected graph of odd order $n \ge 1$, having maximal Randi\'c energy is the sun \cite[Fig. 2]{GutmanFurtulaBozkurt14}.
	The connected graph of even order $n \ge 2$, having maximal Randi\'c energy is the balanced double sun \cite[Fig. 2]{GutmanFurtulaBozkurt14}.
\end{conjecture}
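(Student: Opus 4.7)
The plan is to combine the paper's eigenvalue characterization for caterpillars with a reduction argument from general graphs. The conjecture is really two open problems rolled together, and Conjecture 1.1 must be established first so that attention is restricted to trees. I would therefore proceed in three stages: (i) reduce connected graphs to trees, (ii) reduce trees to caterpillars, and (iii) optimize over the parameters $(p_1,\ldots,p_r)$ to isolate the sun or the balanced double sun.

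For stage (i), the only handle available from the excerpt is the upper bound $RE(G)\le n$ from \eqref{lower_and_upper_bound}. A Cauchy-interlacing argument would try to show that deleting an edge from a cycle of a connected graph does not decrease $RE$, but the Randi\'c matrix entries $1/\sqrt{d_id_j}$ depend globally on degrees, so a single edge removal alters every entry incident to the two endpoints at once and interlacing does not apply directly. One would need a carefully designed local transformation that trades a cycle-edge for a tree-edge while tracking the change in $\sum|\rho_i|$, presumably through a majorization-type inequality for the singular values of the resulting matrices.

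For stage (ii), I would use a pendant-relocation transformation: given a tree $T$ with a branching vertex off the longest induced path, regraft the off-path branches onto that path to produce a caterpillar $T'$ of the same order, and show $RE(T')\ge RE(T)$. For stage (iii), the paper's necessary and sufficient eigenvalue condition should yield, after quotienting by the twin-pendant symmetries, a low-dimensional polynomial system whose roots, together with an explicit multiplicity of $0$ contributed by identical pendants attached to a common spine vertex, determine the full spectrum of $R_{T(p_1,\ldots,p_r)}$. The Randi\'c energy then becomes a symmetric function of $(p_1,\ldots,p_r)$ on the simplex $r+\sum_i p_i = n$, and stepwise exchange arguments $(p_i,p_j)\mapsto(p_i+1,p_j-1)$ should push the optimum toward the balanced configuration, eliminating asymmetric caterpillars one swap at a time.

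The main obstacle is the combination of stages (i) and (iii). Stage (i) is essentially Conjecture 1.1 and resists every standard spectral technique because the Randi\'c matrix is not monotone under edge addition: adding an edge rescales every entry incident to the two endpoints and simultaneously introduces a new nonzero, so neither interlacing nor rank-one perturbation arguments give a signed comparison of $\sum|\rho_i|$. Stage (iii), even granted stages (i) and (ii), requires a Schur-convexity or majorization statement for $RE$ in $(p_1,\ldots,p_r)$; while the permutation-symmetry of the eigenvalue equation is suggestive, no such monotonicity seems to follow directly from the implicit root description, and one might have to differentiate the implicit equation with respect to a continuous extension of the $p_i$ and then invoke a rearrangement inequality on the resulting gradient. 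This difficulty is presumably why the statement is listed as an open conjecture and has only been computationally verified for $n\le 10$.
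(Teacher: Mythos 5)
There is nothing in the paper to compare your argument against: the statement is Conjecture \ref{MaximalEnergyConjecture2}, which the paper explicitly records as open (verified computationally only up to $n=10$) and never attempts to prove. The paper uses it purely as motivation and then proves much weaker, attainable results, namely extremal statements \emph{within} specific caterpillar families $T(p_1,\ldots,p_r)$ for $r=2,3$. Your proposal is a research program rather than a proof: each of the three stages rests on an unestablished claim, and you acknowledge as much. Stage (i) is precisely Conjecture \ref{MaximalEnergyConjecture1}, itself open, and your own discussion correctly explains why interlacing and rank-one perturbation arguments fail for the Randi\'c matrix; stating the obstacle does not remove it.

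Beyond incompleteness, stage (ii) contains a substantive error of direction: it is inconsistent with the conjecture you are trying to prove. The sun of odd order $n\ge 7$ is a spider whose legs have length two; removing its pendant vertices leaves a star, not a path, so the sun is \emph{not} a caterpillar. The same holds for the balanced double sun for large even $n$. Hence, if your pendant-relocation step (regrafting off-path branches onto the longest path never decreases $RE$) were true, the maximal tree would be a caterpillar, which would \emph{refute} Conjecture \ref{MaximalEnergyConjecture2} rather than establish it. Any correct approach must instead show that mass migrates toward a single central vertex (or a central edge) in legs of length two, which is the opposite of the caterpillar reduction. Consequently stage (iii), which optimizes over $(p_1,\ldots,p_r)$ using the paper's eigenvalue condition from Theorem \ref{novo}, is optimizing over a family that cannot contain the conjectured extremal graphs, and no Schur-convexity statement on that simplex could close the argument. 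The honest conclusion is that the statement remains a conjecture and your outline, while correctly diagnosing why the standard spectral tools fail, does not constitute a proof.
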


The aim of this paper is to determine the  extremal graphs for the Randi\'c energy of a family of caterpillars $T(p_1, \cdots p_r)$  of order $n= r+\sum_{i=1}^r p_i$  for cases $r=2$ and $r=3$. The paper is organized as follows.
In Section~\ref{notation} the notation and basic definitions of the main concepts used through the text are introduced. In Section~~\ref{randic_spectra} a caterpillar is considera as the H-join of graphs and some spectral results of graphs obtained by this operation are recalled. Moreover, we get a necessary and sufficient condition for a real number to be an eigenvalue of the Randi\'c matrix. This result plays a important role throughout  the paper. In Section~\ref{extremal_caterpillars} we characterize the extremal caterpillar graphs for $r=2$ (that are the double star) as well as we study the family of caterpillars  $T\big(p,n-p-q-3,q\big)$ of order $n$, and we characterize extremal caterpillar graphs for three cases: $q=p$, $q=n-p-b-3$ and   $q=b$, for any $b\in \{1,\ldots,n-6\}$ fixed.

\section{Preliminaries}\label{notation}
In this paper we deal with undirected simple graphs. For a graph $G$ the vertex set is denoted by $V(G)$ and the edge set by $E(G)$ and $|V(G)|$ is the order of $G$. The edges of $G$ denoted by $ij$, where $i$ and $j$ are the end-vertices of the edge. When $ij \in E(G)$ we say that the vertices $i$ and $j$ are adjacent and also that $i$ is a neighbor of $j$ (and conversely).  The neighborhood of a vertex $v  \in V(G)$ is the set of its neighbors and is denoted by $N_{G}(v) = \{w: vw \in E(G)\}$. The degree of $v$, denoted by $d_v$, is the cardinality of $N_{G}(v)$. The vertices $i$ with $0$ degree are called isolated vertices. Two graphs $G$ and $H$ are isomorphic if there is a bijection $\psi: V(G) \rightarrow V(H)$ such that $ij \in E(G)$ if and only if $\psi(i)\psi(j) \in E(H)$. This binary relation between graphs is denoted by $G \cong H$. The complement graph of a graph $G$, denoted by $\overline{G}$, is such that $V(\overline{G}) = V(G)$ and $E(\overline{G})=\{ij: ij \not \in E(G)\}$. The complete graph of order $n$, denote by $K_n$, is a graph where every pair of vertices are adjacent. The vertices of the complement of $K_n$ are all isolated. The adjacency matrix of a graph $G$ of order $n =|V(G)|$ is $n \times n$ symmetric matrix $A_{G}=(a_{ij})$ such that $a_{ij}=1$ if   $ij \in E(G)$ and zero  otherwise.
The spectrum of a matrix $M$ is the multiset of its eigenvalues denoted by  $\sigma_M$. In particular, the spectrum of the adjacency matrix of a graph $G$, also called the spectrum of $G$, is
$\sigma(G) =\{\lambda_1^{[m_1]} , \lambda_2^{[m_2]}, \ldots, \lambda_s^{[m_s]}\}$, where $m_i$ stands for the multiplicity of $\lambda_i$, for $1 \le i \le s$.

A path with $r$ vertices, denoted by $P_r$, is a sequence of vertices $v_1,v_2, \ldots, v_r$ such that  each vertex is adjacent to the next, that is  $v_1v_{i+1} \in E(G)$ for $i=1, \ldots, r-1$. A cycle $C_{r}$ is a closed path with $r$ edges, that is, such that $v_{r+1}=v_1$.
A tree is a connected acyclic graph; a star of order $r$, denoted by $S_{r+1}$,  is a tree with a central vertex with degree $r$ and all the other $r$ vertices are pendent. A caterpillar is a tree such that removing all pendent vertices give rise to a path with at least two vertices. In particular, $T(p_1,\ldots, p_r)$ denotes a caterpillar obtained by attaching  the central vertex of a star $S_{p_i+1}$ to the $i$-th vertex of  $P_r$, $i=1, \ldots r$. The order of a caterpillar is $n= r+ \sum_{i=1}^r p_i$.

A  caterpillar $T(p_1,\ldots, p_r)$ can also be seen as the H-join
$H[G_1, \dots, G_r,$ $G_{r+1}, \dots, G_{2r}]$, where, for
$1 \le i \le r, \; \left\{\begin{array}{lcl}
                                G_i     & \cong & K_1 \\
                                G_{i+r} & \cong & \overline{K_{p_i}}
                         \end{array}\right.$ and $H$ is the caterpillar of order $2r$, $T(1, \dots, 1)$,  that is, a path $P_r$ with one pendant vertex attached to each vertex of the path.

The null square and the identity matrices of order $n$ are denoted by $O_{n}$ and $I_n$, respectively.

\section{The Randi\'c spectrum of a caterpillar viewed as $H$-join} \label{randic_spectra}

In this section, we consider a caterpillar as the H-join of a family of graphs  (see \cite{CardosoFreitasMartinsRobbiano}),  $T(p_1,\ldots, p_r)$ =
$H[K_1, \ldots, K_1, \overline{K_{p_1}}, \ldots, \overline{K_{p_r}}]$, where $H$ is the caterpillar of order $2r$, $T(1,1, \dots,1)$, that is, a path $P_r$ with a pendant edge attached to each vertex of the path.
The following result, given in \cite{Andrade_et_al17}, characterizes Randi\'c spectra of $H$-join graphs.

\begin{theorem}\cite{Andrade_et_al17} \label{Andrade} 
	Let $H$ be a graph of order $k$. Let $G_j$ be a $d_j$-regular graph of order $n_j$, with $d_j \ge 0$, $n_j \ge 1$, for $j=1, \ldots, k$ and
	$G=H[G_1, \ldots, G_k]$. Let $R_G$ be the Randi\'c 	matrix of $G$. Then,
	\begin{equation*}
	\sigma_{R_G} =
	\sigma_{\Gamma_k}  \cup \bigcup_{j=1}^k \left\{\frac{\lambda}{N_j+d_j}: \lambda \in \sigma\left(A_{G_{j}}\right)\setminus\{d_j\}\right\},
\end{equation*}
	where $N_j = \sum\limits_{i \in N_{H}(j)}{n_i}$, for $j=1, 2, \dots, k$,
	\begin{equation*}
	\Gamma_k = \left(\begin{array}{ccccc}
	\frac{d_1}{N_1+d_1} & \rho_{12}           & \dots & \rho_{1(k-1)} & \rho_{1k} \\
	\rho_{12}           & \frac{d_2}{N_2+d_2} & \dots & \rho_{2(k-1)} & \rho_{2k} \\
	\vdots              & \vdots              & \ddots & \vdots        & \vdots \\
	\rho_{1(k-1)}       & \rho_{2(k-1)}       & \dots & \frac{d_{k-1}}{N_{k-1}+d_{k-1}} & \rho_{(k-1)k} \\
	\rho_{1k}           & \rho_{2k}           & \dots & \rho_{(k-1)k} & \frac{d_{k}}{N_{k}+d_{k}}\\
	\end{array}
	\right)
	\end{equation*}
	and
\begin{equation*}
\rho_{ij} = \delta_{ij}\frac{\sqrt{n_i n_j}}{\sqrt{(N_i+d_i)(N_j+d_j)}},
 \end{equation*}
 with $\delta_{ij}=1$  if $ij \in E(H)$, and zero otherwise, for $ i= 1 \ldots, k-1$ and $j= i+1, \ldots, k$.
\end{theorem}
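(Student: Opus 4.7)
The plan is to write the Randi\'c matrix $R_G$ as a $k \times k$ block matrix and exploit the regularity of each $G_j$ to decompose the ambient space $\mathbb{R}^{n_1+\cdots+n_k}$ into two $R_G$-invariant subspaces. The key observation is that in $G=H[G_1,\ldots,G_k]$ every vertex in the copy of $G_j$ has degree $N_j+d_j$: it has $d_j$ neighbors inside $G_j$ and, for each $i\in N_H(j)$, all $n_i$ vertices of $G_i$ as neighbors. Consequently the $(j,j)$ diagonal block of $R_G$ equals $\frac{1}{N_j+d_j}A_{G_j}$, and for $ij\in E(H)$ the $(i,j)$ block is $\frac{1}{\sqrt{(N_i+d_i)(N_j+d_j)}}J_{n_i\times n_j}$, with the remaining off-diagonal blocks zero.

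Let $V_j$ be the line spanned by $\mathbf{1}_{n_j}$ (viewed as a vector supported on block $j$) and put $V=\bigoplus_{j=1}^k V_j$. Since $G_j$ is $d_j$-regular, $\mathbf{1}_{n_j}$ is an eigenvector of $A_{G_j}$ with eigenvalue $d_j$; moreover each off-diagonal block $J_{n_i\times n_j}$ sends $\mathbf{1}_{n_i}$ into $V_j$ and annihilates $V_i^{\perp}$. Therefore both $V$ and $V^{\perp}$ are invariant under $R_G$. On $V^{\perp}$ the off-diagonal $J$-blocks act as zero, so $R_G$ restricts to the block-diagonal operator whose $j$-th block is $\frac{1}{N_j+d_j}A_{G_j}$ acting on $\mathbf{1}_{n_j}^{\perp}\subset\mathbb{R}^{n_j}$. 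This immediately yields the eigenvalues $\lambda/(N_j+d_j)$ for $\lambda\in\sigma(A_{G_j})\setminus\{d_j\}$ (with multiplicities, with exactly one copy of $d_j$ removed).

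On $V$ I would use the orthonormal basis $\hat{\mathbf{1}}_{n_j}=\mathbf{1}_{n_j}/\sqrt{n_j}$ and compute $R_G\hat{\mathbf{1}}_{n_j}$ directly. The diagonal block contributes $\frac{d_j}{N_j+d_j}\hat{\mathbf{1}}_{n_j}$, while for each $i\in N_H(j)$ the $(j,i)$ block maps $\hat{\mathbf{1}}_{n_i}$ to $\frac{n_i}{\sqrt{n_i}\sqrt{(N_i+d_i)(N_j+d_j)}}\mathbf{1}_{n_j}=\frac{\sqrt{n_in_j}}{\sqrt{(N_i+d_i)(N_j+d_j)}}\hat{\mathbf{1}}_{n_j}$. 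Reading off the coefficients recovers exactly the matrix $\Gamma_k$, so $\sigma(R_G|_V)=\sigma_{\Gamma_k}$. Taking the (multiset) union over the two invariant pieces gives the stated decomposition.

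The main obstacle is purely bookkeeping. First, one must work in the orthonormal basis $\{\hat{\mathbf{1}}_{n_j}\}$ rather than $\{\mathbf{1}_{n_j}\}$ to produce the symmetric entry $\sqrt{n_in_j}$ (and to guarantee that $\Gamma_k$ is symmetric, which is needed so that merging the two spectra respects real-diagonalizability and multiplicities). Second, multiplicities must be tracked carefully: if $d_j$ is an eigenvalue of $A_{G_j}$ of multiplicity $m>1$, then exactly one copy is consumed by the $V$-part of the decomposition and the remaining $m-1$ copies (rescaled by $1/(N_j+d_j)$) survive in the $V^{\perp}$-part, which is the meaning of ``$\sigma(A_{G_j})\setminus\{d_j\}$'' in the statement.
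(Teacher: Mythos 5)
Your argument is correct. Note that the paper itself gives no proof of this theorem --- it is quoted verbatim from the reference [Andrade, Gomes, Robbiano, MATCH 2017] --- so there is nothing internal to compare against; but your decomposition into the $k$-dimensional subspace spanned by the normalized all-ones vectors of the blocks and its orthogonal complement is exactly the standard equitable-partition argument used for $H$-joins of regular graphs, and you handle the two delicate points correctly: working in the orthonormal basis $\mathbf{1}_{n_j}/\sqrt{n_j}$ so that the restriction of $R_G$ is the symmetric matrix $\Gamma_k$ with entries $\sqrt{n_in_j}/\sqrt{(N_i+d_i)(N_j+d_j)}$, and removing exactly one copy of $d_j$ from $\sigma(A_{G_j})$ (which matters when $G_j$ is disconnected, e.g.\ $G_j\cong\overline{K_{p_i}}$ as used in Corollary \ref{Cor1}). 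The only blemish is a harmless index slip in the invariance step: the block $J_{n_i\times n_j}$ maps $\mathbf{1}_{n_j}$ into $V_i$ and annihilates $\mathbf{1}_{n_j}^{\perp}$, not the other way around.
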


\begin{remark}\label{rem1}
{\rm It is clear that the Randi\'c matrix of a $d_j$-regular graph $G_j$ is
$R_{G_j}= \frac{1}{d_j}A_{G_j}$ if $d_j>0$ and zero otherwise. On the other hand, if $d_j=0,$ for $j=1, \dots, k$, then $\Gamma_k = \Omega A_H \Omega$, with
$\Omega = \text{diag}\left\{\sqrt{\frac{n_1}{N_1}}, \dots, \sqrt{\frac{n_k}{N_k}}\right\}$.}
\end{remark}

Since $K_1$ and $\overline{K_{p_i}}$, for $i=1,\ldots,r$, are $0$-regular graphs,
we have the following result, which  plays an important role in this paper:
\begin{corollary}\label{Cor1}
Let $H =T(1,1, \ldots, 1)$ be the caterpillar of order $2r$, $r \ge 2$, obtained from a path $P_r$ and a pendent vertex attached to each vertex of the path.
Let  $T = T(p_1, \ldots, p_r) = H[K_1, \dots, K_1, \overline{K_{p_1}}, \dots, \overline{K_{p_r}}]$
be a caterpillar of order $ n= r+\sum\limits_{i=1}^r p_i$. Then,
\begin{equation*}
\sigma_{R_T}= \sigma_{\Gamma_{2r}}
\cup \left\{  0^{[ \sum_{i=1}^r (p_i-1)]}\right\}.
\end{equation*}
\end{corollary}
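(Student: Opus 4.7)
The plan is to apply Theorem \ref{Andrade} directly to the H-join decomposition $T(p_1,\ldots,p_r) = H[K_1,\ldots,K_1,\overline{K_{p_1}},\ldots,\overline{K_{p_r}}]$ with $k = 2r$ slots. The key observation is that every graph plugged into $H$ is edgeless (either $K_1$ or a coclique), so each $G_j$ is $0$-regular, i.e.\ $d_j = 0$ for all $j \in \{1,\ldots,2r\}$.

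First I would unpack the two contributions in the spectral formula from Theorem \ref{Andrade}. For $1 \le j \le r$, the slot is $G_j \cong K_1$, so $\sigma(A_{G_j}) = \{0\}$ and $\sigma(A_{G_j}) \setminus \{d_j\} = \sigma(A_{G_j}) \setminus \{0\} = \emptyset$; these slots contribute nothing to the union. For $r+1 \le j \le 2r$, the slot is $G_j \cong \overline{K_{p_{j-r}}}$, whose adjacency matrix is the zero matrix of order $p_{j-r}$, so $\sigma(A_{G_j}) = \{0^{[p_{j-r}]}\}$; removing a single copy of $d_j = 0$ leaves the multiset $\{0^{[p_{j-r}-1]}\}$. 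Since $H$ is connected of order $2r \ge 4$, each $N_j \ge 1$, so $N_j + d_j = N_j > 0$ and the fractions $\lambda/(N_j+d_j)$ are well defined; with $\lambda = 0$ they remain $0$ with the same multiplicity.

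Combining these, the union over $j$ in Theorem \ref{Andrade} reduces exactly to $\{0^{[\sum_{i=1}^r (p_i - 1)]}\}$, and the remaining part of the spectrum is $\sigma_{\Gamma_{2r}}$, yielding
\begin{equation*}
\sigma_{R_T} = \sigma_{\Gamma_{2r}} \cup \{0^{[\sum_{i=1}^r (p_i - 1)]}\}
\end{equation*}
as claimed.

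There is no genuine obstacle here; the statement is essentially a bookkeeping specialization of Theorem \ref{Andrade}. The only points that require a moment of care are (i) confirming that the $K_1$ slots contribute the empty multiset (so we do \emph{not} get extra zeros there), and (ii) checking that $N_j + d_j \neq 0$ so that the quotient in the theorem is legitimate; both are immediate from the connectedness of $H = T(1,1,\ldots,1)$ and the fact that $d_j = 0$ for each slot.
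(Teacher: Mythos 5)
Your proof is correct and follows exactly the route the paper takes: the corollary is presented there as an immediate specialization of Theorem \ref{Andrade} to the case where every constituent graph is $0$-regular, with the $K_1$ slots contributing empty multisets and each $\overline{K_{p_i}}$ contributing $p_i-1$ zeros. Your bookkeeping of the multiplicities and the check that $N_j+d_j>0$ are exactly the (unstated) details behind the paper's one-line justification.
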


As a consequence, in order to obtain the spectrum of the Randi\'c matrix of $ T(p_1, \ldots, p_r) $ we focus our attention on the spectrum of $\Gamma_{2r}$.
Firstly, note that
$$
\Omega = \text{diag}\left\{\sqrt{\frac{1}{N_1}}, \dots, \sqrt{\frac{1}{N_r}},\sqrt{\frac{p_1}{N_{r+1}}}, \dots , \sqrt{\frac{p_r}{N_{2r}}}\right\} = \begin{bmatrix}
 \Omega_{1}    & O_{r} \\
 O_{r} & \Omega_{2}
 \end{bmatrix}
$$
with
\begin{equation} \label{Gamma2r0}
\Omega_{1} = \text{diag}\left\{\sqrt{\frac{1}{N_1}}, \dots, \sqrt{\frac{1}{N_{r}}}\right\}, \qquad
\Omega_{2} = \text{diag}\left\{\sqrt{\frac{p_1}{N_{r+1}}}, \dots, \sqrt{\frac{p_r}{N_{2r}}}\right\},
\end{equation}

Therefore, we can write
\begin{equation} \label{Gamma2r1}
\Gamma_{2r}
 = \Omega A_H \Omega
 = \begin{bmatrix}
 \Omega_{1}    & O_{r} \\
 O_{r} & \Omega_{2}
 \end{bmatrix}
 \begin{bmatrix} A_{P_r} & I_r \\
   I_r & O_{r}
 \end{bmatrix}
 \begin{bmatrix}
 \Omega_{1}    & O_{r} \\
 O_{r} & \Omega_{2}
 \end{bmatrix}
 = \begin{bmatrix} A & B \\
        B & O_{r}
     \end{bmatrix},
\end{equation}
where
\begin{equation}\label{AB}
A=\Omega_{1} A_{P_{r}} \Omega_{1} \quad
\text{and} \quad
B=\Omega_{1} \Omega_{2}.
\end{equation}\

It is worth to recall a famous determinantal identity presented by Issa Schur in 1917 \cite{schur1917}
referred as the formula of Schur by Gantmacher \cite[p. 46]{Gantmacher1998}. In the sixties, the term Schur complement was introduced by Emilie Haynsworth \cite{Haynsworth1968} jointly with the following notation. Considering a square matrix
$M=\begin{bmatrix} A & B \\  C & D \end{bmatrix}$, where $A$ and $D$ are square block matrices and $A$ is nonsingular, the Schur complement of $A$ in $M$ is defined as
\begin{equation*}
M/A = D - CA^{-1}B.\label{schur_complement}
\end{equation*}
For more details see \cite{HornZhang2005}. Using the above notation, the next theorem states the Schur determinantal identity. For the readers convenience, the very short proof presented in \cite{HornZhang2005} is reproduced.

\begin{theorem}\cite{schur1917}\label{Imp}
Let \ $M=\begin{bmatrix} A & B \\  C & D \end{bmatrix}$, where $A$ and $D$ are square submatrices of order $m$ and $n$,
respectively. If $A$ is nonsingular then
\begin{align*}
\det(M)=det(A) \cdot \det(M/A).
\end{align*}
\end{theorem}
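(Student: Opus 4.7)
The plan is to reduce $M$ to a block triangular form by a single block row operation, so that the determinant becomes the product of determinants of the two diagonal blocks. Concretely, I would verify the factorization
\begin{equation*}
\begin{bmatrix} A & B \\ C & D \end{bmatrix}
= \begin{bmatrix} I_m & O \\ C A^{-1} & I_n \end{bmatrix}
  \begin{bmatrix} A & B \\ O & D - C A^{-1} B \end{bmatrix}
\end{equation*}
by direct block multiplication; this is well-defined because the hypothesis that $A$ is nonsingular guarantees that $A^{-1}$ exists, so the left factor is a legitimate matrix.

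Taking determinants and using multiplicativity of the determinant, I would argue separately that each factor on the right has a simple determinant. The first factor is block lower triangular with $I_m$ and $I_n$ on the diagonal, hence has determinant $1$. The second factor is block upper triangular with diagonal blocks $A$ and $D - C A^{-1} B = M/A$, so its determinant is $\det(A)\cdot\det(M/A)$. Combining these two facts yields the claimed identity.

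The only genuinely nontrivial ingredient in the argument is the fact that the determinant of a block triangular matrix with square diagonal blocks equals the product of the determinants of those blocks. I would treat this as a known lemma, since it follows from standard row (or column) reduction within the zero block, and it is the only step where a little care is required about the sizes $m$ and $n$. Everything else is routine block arithmetic.
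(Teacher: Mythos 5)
Your proposal is correct and uses exactly the same factorization as the paper, namely $M=\bigl[\begin{smallmatrix} I_m & 0 \\ CA^{-1} & I_n \end{smallmatrix}\bigr]\bigl[\begin{smallmatrix} A & B \\ 0 & D-CA^{-1}B \end{smallmatrix}\bigr]$, followed by taking determinants. The only difference is that you spell out the block-triangular determinant lemma that the paper leaves implicit, which is a reasonable level of added detail.
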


\begin{proof} \
It is immediate that
$$
\begin{bmatrix} A & B \\  C & D \end{bmatrix} = \begin{bmatrix} I_m & 0 \\  CA^{-1} & I_n        \end{bmatrix}
                                                \begin{bmatrix}  A  & B \\  0       & D-CA^{-1}B \end{bmatrix}.
$$
The identity follows by taking the determinant of both sides.
\end{proof}

Similarly, if $D$ is nonsingular then
\begin{align}
\det(M)=\det(A - BD^{-1}C) \cdot \det(D). \label{schur_formula}
\end{align}
Note that
$
\begin{bmatrix} A & B \\  C & D \end{bmatrix} = \begin{bmatrix} I_m & BD^{-1}    \\  0 & I_n \end{bmatrix}
                                          \begin{bmatrix} A- BD^{-1}C  & 0 \\  C & D   \end{bmatrix}.
$

From \eqref{schur_formula}, we may establish the following spectral characterization for the matrix $\Gamma_{2r}$, which will play an important role in getting our main results:
\begin{theorem}\label{novo}
Let $H =T(1,1, \ldots, 1)$ be the caterpillar of order $2r$, $r \ge 2$ and let $\Gamma_{2r}$ be partitioned as in \eqref{Gamma2r1}. Then, $\lambda\in \sigma_{\Gamma_{2r}}$ if and only if
\begin{align*}
det(\lambda^{2} I_{r}-\lambda A-B^{2})=0,
\end{align*}
where $A$ and $B$ are defined as in \eqref{AB}.
\end{theorem}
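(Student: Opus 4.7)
The plan is to apply the Schur determinantal identity (in the form \eqref{schur_formula}) directly to the matrix $\lambda I_{2r} - \Gamma_{2r}$, viewed as a $2 \times 2$ block matrix with blocks of size $r$. Since $\Gamma_{2r} = \begin{bmatrix} A & B \\ B & O_r \end{bmatrix}$ by \eqref{Gamma2r1}, we have
\begin{equation*}
\lambda I_{2r} - \Gamma_{2r} = \begin{bmatrix} \lambda I_r - A & -B \\ -B & \lambda I_r \end{bmatrix},
\end{equation*}
and the $(2,2)$ block $D = \lambda I_r$ is nonsingular for every $\lambda \neq 0$.

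First I would invoke \eqref{schur_formula} with this choice of $D$ to obtain
\begin{equation*}
\det(\lambda I_{2r} - \Gamma_{2r}) \;=\; \det\!\bigl((\lambda I_r - A) - (-B)(\lambda I_r)^{-1}(-B)\bigr)\cdot \det(\lambda I_r),
\end{equation*}
which simplifies to $\lambda^{r}\det\!\bigl(\lambda I_r - A - \lambda^{-1}B^2\bigr)$. Pulling a factor $\lambda^{-1}$ out of each of the $r$ rows of the inner matrix collapses the $\lambda^{r}$ prefactor and yields
\begin{equation*}
\det(\lambda I_{2r} - \Gamma_{2r}) \;=\; \det\!\bigl(\lambda^{2} I_r - \lambda A - B^{2}\bigr),\qquad \lambda \neq 0.
\end{equation*}
Since both sides of this equality are polynomials in $\lambda$ of degree $2r$, agreement for all $\lambda \neq 0$ extends to $\lambda = 0$ by continuity. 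The equivalence $\lambda \in \sigma_{\Gamma_{2r}} \iff \det(\lambda I_{2r} - \Gamma_{2r}) = 0$ then delivers the stated characterisation.

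There is essentially no serious obstacle here: the statement is a direct consequence of Theorem~\ref{Imp} (used in the form \eqref{schur_formula}) applied to the block structure \eqref{Gamma2r1}. The only mildly delicate point is the case $\lambda = 0$, where the lower-right block fails to be invertible, but this is handled for free by the polynomial-continuity remark above (alternatively, one can verify it directly by observing that $B = \Omega_1\Omega_2$ is a diagonal matrix with positive entries, so that $\det(-B^{2})=(-1)^{r}\det(B)^{2}$ matches $\det(\Gamma_{2r})$ computed from the block form by a row-block swap).
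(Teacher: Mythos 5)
Your proposal is correct and follows essentially the same route as the paper: both apply the Schur determinantal identity \eqref{schur_formula} with $D=\lambda I_r$ to the block form \eqref{Gamma2r1} and cancel the resulting factor $\lambda^{r}$. Your explicit treatment of the $\lambda=0$ case via polynomial continuity is a small refinement the paper leaves implicit, but it does not change the argument.
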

\begin{proof} \
The characteristic polynomial of  $\Gamma_{2r}$ is
\begin{eqnarray*}
p_{\Gamma_{2r}}(\lambda)
&=& \operatorname{det}(\lambda I_{2r}-\Gamma_{2r})
=det\left(
\begin{bmatrix}
	\lambda I_{r}-A & -B \\
	-B & \lambda I_{r}
\end{bmatrix}\right).
\end{eqnarray*}
Thus, applying \eqref{schur_formula}, we obtain
\begin{eqnarray*}
p_{\Gamma_{2r}}(\lambda)
&=&det(\lambda I_{r})\cdot det\left(\lambda I_{r}-A-B\left(\frac{1}{\lambda} I_{r}\right)B\right) \\ \\
&=&\lambda^{r}\cdot det\left(\left(\frac{1}{\lambda}\right)(\lambda^{2} I_{r}-\lambda A-B^{2})\right) \\ \\
&=&\lambda^{r}\cdot \left(\frac{1}{\lambda}\right)^{r}\cdot det\left(\lambda^{2} I_{r}-\lambda A-B^{2}\right) = det\left(\lambda^{2} I_{r}-\lambda A-B^{2}\right).
\end{eqnarray*}
\end{proof}

\section{Extremal caterpillar graphs for Randi\'c energy}\label{extremal_caterpillars}
In this section, we obtain the extremal graphs in the family of caterpillars, for $r=2,3$.
\subsection{
	Extremal caterpillar graphs $\mathbf{T(p,n-p-2)}$, $\mathbf{p=1, \ldots, \lfloor \frac{n-2}{2} \rfloor}$.}
\begin{center}
\begin{tikzpicture}
\draw [line width=1.pt] (0.,0.)-- (-1.,-2.);
\draw [line width=1.pt] (0.,0.)-- (1.,-2.);
\draw [line width=1.pt] (0.,0.)-- (3.,0.);
\draw [line width=1.pt] (3.,0.)-- (2.,-2.);
\draw [line width=1.pt] (3.,0.)-- (4.,-2.);
\begin{scriptsize}
\draw [fill=black] (0.,0.) circle (2.5pt);
\draw (0.,0.3) node {$1$};
\draw [fill=black] (-1.,-2.) circle (2.5pt);
\draw [fill=black] (1.,-2.) circle (2.5pt);
\draw [fill=black] (3.,0.) circle (2.5pt);
\draw (3.,0.3) node {$2$};
\draw [fill=black] (2.,-2.) circle (2.5pt);
\draw [fill=black] (4.,-2.) circle (2.5pt);
\draw (0.,-2.) node {$\ldots$};
\draw(0.,-2.3) node {$\underbrace{\hspace{2cm}}$};
\draw (0.,-2.6) node {$p$};
\draw (3.,-2.) node {$\ldots$};
\draw(3.,-2.3) node {$\underbrace{\hspace{2cm}}$};
\draw (3.,-2.6) node {$n-p-2$};
\end{scriptsize}
\end{tikzpicture}
\end{center}

\begin{theorem}
Let $T_{p}=T(p,n-p-2)$, $p=1, \ldots, \lfloor \frac{n-2}{2} \rfloor$ be a caterpillar of order $n\geq 4$. 	
Then
\begin{equation*}
2+\sqrt{ \frac{2(n-3)}{n-2}} \ \leq \ RE(T_{p}) \ \leq \ 4-\frac{4}{n}.
\end{equation*}
The lower bound is attained if and only if $p=1$ (the graph obtained by attaching a pendent vertex to a pendent vertex of $S_{n-1}$) and the upper bound is attained if and only if $T_{p}$ has even order and $p=\frac{n-2}{2}$.
\end{theorem}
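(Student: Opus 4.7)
The plan is to apply Corollary~\ref{Cor1} and Theorem~\ref{novo} to obtain a closed form for $RE(T_p)$, and then optimize an elementary expression in $p$ over the integers $\{1,\ldots,\lfloor(n-2)/2\rfloor\}$.

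Since $r=2$, Corollary~\ref{Cor1} gives $\sigma_{R_{T_p}}=\sigma_{\Gamma_4}\cup\{0^{[n-4]}\}$, so $RE(T_p)$ equals the sum of absolute values of the four eigenvalues of $\Gamma_4$. With $N_1=p+1$, $N_2=n-p-1$, $N_3=N_4=1$, $n_3=p$, $n_4=n-p-2$, the $2\times 2$ blocks in~\eqref{AB} become
\[A=\alpha\begin{pmatrix}0&1\\1&0\end{pmatrix},\qquad B=\begin{pmatrix}\sqrt{b_1}&0\\0&\sqrt{b_2}\end{pmatrix},\]
where $\alpha^{2}=\frac{1}{(p+1)(n-p-1)}$, $b_1=\frac{p}{p+1}$, $b_2=\frac{n-p-2}{n-p-1}$. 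Theorem~\ref{novo} then reduces the characteristic equation of $\Gamma_4$ to
\[\det(\lambda^{2}I_{2}-\lambda A-B^{2})=\lambda^{4}-(b_1+b_2+\alpha^{2})\lambda^{2}+b_1b_2=0.\]
A direct check (equivalently, the fact that $\rho_1(T_p)=1$) shows that $\lambda^{2}=1$ is a root, so the other positive squared eigenvalue is $\mu=b_1b_2=\frac{p(n-p-2)}{(p+1)(n-p-1)}$. Because $T_p$ is bipartite, $\sigma_{\Gamma_4}$ is symmetric about $0$, giving eigenvalues $\pm 1,\pm\sqrt{\mu}$ and therefore
\[RE(T_p)=2+2\sqrt{\frac{p(n-p-2)}{(p+1)(n-p-1)}}.\]

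It remains to optimize $f(p):=\frac{p(n-p-2)}{(p+1)(n-p-1)}$ over $p\in\{1,\ldots,\lfloor(n-2)/2\rfloor\}$. Expanding one observes $(p+1)(n-p-1)=p(n-p-2)+(n-1)$, so
\[f(p)=1-\frac{n-1}{p(n-p-2)+(n-1)}.\]
Hence $f$ is strictly increasing on any interval where $p(n-p-2)$ is strictly increasing. The latter is a concave quadratic in $p$ with vertex at $p=(n-2)/2$, and so is strictly increasing on $[1,(n-2)/2]$. Consequently the minimum of $f$ over the integer range is attained at $p=1$, yielding $f(1)=\frac{n-3}{2(n-2)}$ and the lower bound $2+\sqrt{2(n-3)/(n-2)}$. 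When $n$ is even the maximum is attained at the integer $p=(n-2)/2$, where $f=((n-2)/n)^{2}$ and $RE(T_p)=4-4/n$. When $n$ is odd the integer maximum is at $p=(n-3)/2$, giving $f=(n-3)/(n+1)$; the inequality $2+2\sqrt{(n-3)/(n+1)}<4-4/n$ reduces after squaring to the trivial $0<4$, so the upper bound holds strictly in that case, which gives the claimed uniqueness.

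The main conceptual step is spotting that $\lambda^{2}=1$ is always a root of the quartic, which collapses the computation of $RE(T_p)$ to the single quantity $2+2\sqrt{f(p)}$; everything else is a routine concavity argument. The only mild subtlety is the odd-$n$ case of the upper bound, which requires the short strict inequality check above to exclude a spurious extremal graph.
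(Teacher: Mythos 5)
Your proposal is correct and follows essentially the same route as the paper: Theorem~\ref{novo} reduces everything to the biquadratic whose roots give $RE(T_p)=2+2\sqrt{\tfrac{p(n-p-2)}{(p+1)(n-p-1)}}$, and the bounds then follow from monotonicity of that ratio for $1\le p\le\lfloor\tfrac{n-2}{2}\rfloor$. Your derivative-free monotonicity argument via the identity $(p+1)(n-p-1)=p(n-p-2)+(n-1)$ and your explicit check of the strict inequality $2+2\sqrt{(n-3)/(n+1)}<4-4/n$ in the odd-$n$ case are only minor variations (the paper computes $f'$ directly and merely asserts the odd-$n$ comparison, where incidentally it misprints $n+2$ for $n+1$), so the substance of the two proofs is identical.
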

\begin{proof} \
By Theorem \ref{novo}, the eigenvalues of $\sigma_{\Gamma_{4}}$ are the zeros of the polynomial
$ \operatorname{det}(\lambda^{2} I_{2}-\lambda A-B^{2})=0$
where (see (\ref{AB})),
\[
A=\begin{bmatrix}
 0 & \frac{1}{\sqrt{(p+1)(n-p-1)}} \\  \frac{1}{\sqrt{(p+1)(n-p-1)}} & 0
 \end{bmatrix}
\hbox{ and }
B =\begin{bmatrix}
\frac{\sqrt{p}}{\sqrt{p+1}} & 0 \\
0 & \frac{\sqrt{n-p-2}}{\sqrt{n-p-1}}
\end{bmatrix}.
\]
So,
\begin{eqnarray*}
\operatorname{det}(\lambda^{2} I_{2}-\lambda A-B^{2})
&=&\operatorname{det}\begin{bmatrix}
	\lambda^{2}-\frac{p}{p+1} & -\frac{\lambda}{\sqrt{(p+1)(n-p-1)}} \\ -\frac{\lambda}{\sqrt{(p+1)(n-p-1)}} & \lambda^{2}-\frac{n-p-2}{n-p-1}
   \end{bmatrix} \\
&=&\left(\lambda^{2}-\frac{p}{p+1}\right)\left(\lambda^{2}-\frac{n-p-2}{n-p-1}\right)-\frac{\lambda^{2}}{(p+1)(n-p-1)}
\\
&=&\frac{\lambda^{2}-1}{(p+1)(n-p-1)} \left((p+1)(n-p-1)\lambda^{2}-p(n-p-2)\right).
\end{eqnarray*}
Consequently,
\begin{equation*}
\sigma_{\Gamma_{4}}=\left\{\pm \sqrt{\frac{p(n-p-2)}{(p+1)(n-p-1)}},  \pm 1\right\}.
\end{equation*}
and
\begin{equation*}\label{Imp2}
RE(T_{p})
=\sum_{i=1}^{n}{\vert \lambda_{i}(T_{p}) \vert}
=2+2\sqrt{\frac{p(n-p-2)}{(p+1)(n-p-1)}},
\end{equation*}
for all $p=1,\ldots,\left\lfloor\frac{n-2}{2}\right\rfloor$.
For
$1\le x \le \left\lfloor\frac{n-2}{2}\right\rfloor$,
let $f(x)=\frac{x\,(n-x-2)}{(x+1)(n-x-1)}$.  Then,
\[
f^{\prime}(x)=\frac{\big(n-1\big)\big(n-2(x+1)\big)}{(x+1)^2(n-x-1)^2} \ge 0 .
\]
if and only if
$1\le x \le \frac{n-2}{2}$. Therefore, $f$ is an increasing function in this interval,  and consequently,
\begin{align*}
2+\sqrt{\frac{2(n-3)}{n-2}} \leq \ RE(T_{p}) \leq  RE(T_{\lfloor\frac{n-2}{2}\rfloor}),
\end{align*}
for all \ $p=1,\ldots,\left\lfloor\frac{n-2}{2}\right\rfloor$. Finally, if $n$ is even,
\[
RE(T_{\lfloor\frac{n-2}{2}\rfloor})
=RE(T_{\frac{n-2}{2}})
=2+2\left(\frac{n-2}{n}\right)
= 4 -\frac{4}{n},
\]
and if $n$ is odd, 
\begin{align*}
RE(T_{\lfloor\frac{n-2}{2}\rfloor})
=RE(T_{\lfloor\frac{n-3}{2}+\frac{1}{2}\rfloor})
=RE(T_{\frac{n-3}{2}})
=2+2\left(\sqrt{\frac{n-3}{n+2}}\right)   < 4 -\frac{4}{n}
\end{align*}
for all $n \geq 3$.
\end{proof}

\subsection{Extremal caterpillar graphs $\mathbf{T\big(p,n-p-q-3,q\big)}$, $\mathbf{p, q\in \{1,\ldots,n-5\}}$}

\begin{center}
\begin{tikzpicture}
\draw [line width=1.pt] (0.,0.)-- (-1.,-2.);
\draw [line width=1.pt] (0.,0.)-- (1.,-2.);
\draw [line width=1.pt] (0.,0.)-- (3.,0.);
\draw [line width=1.pt] (3.,0.)-- (2.,-2.);
\draw [line width=1.pt] (3.,0.)-- (4.,-2.);
\draw [line width=1.pt] (3.,0.)-- (6.,0.);
\draw [line width=1.pt] (6.,0.)-- (5.,-2.);
\draw [line width=1.pt] (6.,0.)-- (7.,-2.);
\begin{scriptsize}
\draw [fill=black] (0.,0.) circle (2.5pt);
\draw (0.,0.3) node {$1$};
\draw [fill=black] (-1.,-2.) circle (2.5pt);
\draw [fill=black] (1.,-2.) circle (2.5pt);
\draw [fill=black] (3.,0.) circle (2.5pt);
\draw (3.,0.3) node {$2$};
\draw [fill=black] (2.,-2.) circle (2.5pt);
\draw [fill=black] (4.,-2.) circle (2.5pt);
\draw [fill=black] (6.,0.) circle (2.5pt);
\draw (6.,0.3) node {$3$};
\draw [fill=black] (5.,-2.) circle (2.5pt);
\draw [fill=black] (7.,-2.) circle (2.5pt);
\draw (0.,-2.) node {$\ldots$};
\draw(0.,-2.3) node {$\underbrace{\hspace{2cm}}$};
\draw (0.,-2.6) node {$p$};
\draw (3.,-2.) node {$\ldots$};
\draw(3.,-2.3) node {$\underbrace{\hspace{2cm}}$};
\draw (3.,-2.6) node {$n-p-q-3$};
\draw (6.,-2.) node {$\ldots$};
\draw(6.,-2.3) node {$\underbrace{\hspace{2cm}}$};
\draw (6.,-2.6) node {$q$};
\end{scriptsize}
\end{tikzpicture}
\end{center}
For this class of caterpillars,
\[
\Omega_{1}
= \operatorname{diag} \left\{\frac{1}{\sqrt{p+1}}, \frac{1}{\sqrt{n-p-q-1}}, \frac{1}{\sqrt{q+1}}\right\}\]
 and
\[\Omega_{2}
=\operatorname{diag}\left\{\sqrt{p}, \sqrt{n-p-q-3}, \sqrt{q}\right\}.\]
Therefore (see \eqref{Gamma2r0},  \eqref{Gamma2r1} and \eqref{AB})
\[
\Gamma_{6} = \Omega A_{H} \Omega = \begin{bmatrix}
A & B \\
B & O_{3}
\end{bmatrix},
\]
with
 \begin{equation*} \label{A3}
A = \Omega_{1}A_{P_{3}}\Omega_{1} = \begin{bmatrix}
 0 & \frac{1}{\sqrt{p+1}\sqrt{n-p-q-1}} & 0 \\
 \frac{1}{\sqrt{p+1}\sqrt{n-p-q-1}} & 0 & \frac{1}{\sqrt{q+1}\sqrt{n-p-q-1}} \\
0 & \frac{1}{\sqrt{q+1}\sqrt{n-p-q-1}} & 0
\end{bmatrix}
\end{equation*}
and
 \begin{equation*} \label{B3}
B= \Omega_{1}\Omega_{2} = \begin{bmatrix}
\frac{\sqrt{p}}{\sqrt{p+1}} & 0 & 0 \\
0 & \frac{\sqrt{n-p-q-3}}{\sqrt{n-p-q-1}} & 0 \\
0 & 0 & \frac{\sqrt{q}}{\sqrt{q+1}}
\end{bmatrix}.
\end{equation*}
By Theorem \ref{novo}, as

\[\lambda^{2} I_{3}-\lambda A-B^{2} =  \begin{bmatrix} \displaystyle
 \lambda^{2}-\frac{p}{p+1} & -\frac{\lambda}{\sqrt{p+1}\sqrt{n-p-q-1}} & 0 \\
 -\frac{\lambda}{\sqrt{p+1}\sqrt{n-p-q-1}} & \lambda^{2}-\frac{n-p-q-3}{n-p-q-1} & -\frac{\lambda}{\sqrt{q+1}\sqrt{n-p-q-1}} \\
0 &-\frac{\lambda}{\sqrt{q+1}\sqrt{n-p-q-1}} & \lambda^{2}-\frac{q}{q+1}
\end{bmatrix},\]
{\footnotesize
\begin{align*}
&\operatorname{det}(\lambda^{2} I_{3}-\lambda A-B^{2}) = \\ \\
&=\left(\lambda^{2}-\frac{p}{p+1}\right)\operatorname{det}\left(\begin{bmatrix} \lambda^{2}-\frac{n-p-q-3}{n-p-q-1} & -\frac{\lambda}{\sqrt{q+1}\sqrt{n-p-q-1}} \\ -\frac{\lambda}{\sqrt{q+1}\sqrt{n-p-q-1}} & \lambda^{2}-\frac{q}{q+1}\end{bmatrix}\right) \\ \\ &+\left(\frac{\lambda}{\sqrt{p+1}\sqrt{n-p-q-1}}\right)\operatorname{det}\left(\begin{bmatrix} -\frac{\lambda}{\sqrt{p+1}\sqrt{n-p-q-1}} & -\frac{\lambda}{\sqrt{q+1}\sqrt{n-p-q-1}} \\ 0 & \lambda^{2}-\frac{q}{q+1}\end{bmatrix}\right) \\ \\
&=\left(\lambda^{2}-\frac{p}{p+1}\right)\left[\left(\lambda^{2}-\frac{n-p-q-3}{n-p-q-1}\right)\left(\lambda^{2}-\frac{q}{q+1}\right)-
\frac{\lambda^{2}}{(q+1)(n-p-q-1)}\right] \\ \\
&-\left(\frac{\lambda^{2}}{(p+1)(n-p-q-1)}\right)\left(\lambda^{2}-\frac{q}{q+1}\right) \\ \\
&=\frac{\left(\lambda^{2}(p+1)-p\right)\left[\left(\lambda^{2}(n-p-q-1)-(n-p-q-3)\right)\left(\lambda^{2}(q+1)-q\right)-\lambda^{2}\right]-\lambda^{2}\left(\lambda^{2}(q+1)-q\right)}{(p+1)(q+1)(n-p-q-1)}. \end{align*}
}\ \\
After some algebraic manipulation on the above expression, we get that \\
{\footnotesize
\begin{align*}
\operatorname{det}(\lambda^{2} I_{3}-\lambda A-B^{2}) &= \frac{1}{(p+1)(q+1)(n-p-q-1)}\bigg[(p+1)(q+1)(n-p-q-1)\lambda^{6}
\\ \\
&-\left[(n-p-q-2)\left(q(2p+1)+p\right)+(p+1)(q+1)(n-p-q-1)\right]\lambda^{4}
\\ \\
&+\left[pq(n-p-q-3)+(n-p-q-2)\left(q(2p+1)+p\right)\right]\lambda^{2}-pq(n-p-q-3)\bigg]
\end{align*}
\begin{align*}
&=\frac{(\lambda^{2}-1)\bigg[(p+1)(q+1)(n-p-q-1)\lambda^{4}-(n-p-q-2)\left(q(2p+1)+p\right)\lambda^{2}+pq(n-p-q-3)\bigg]}{(p+1)(q+1)(n-p-q-1)}
\\ \\
&=\frac{1}{\eta(n,p,q)}\big(\lambda^{2}-1\big)\bigg[\eta(n,p,q)\lambda^{4}-\zeta(n,p,q)\lambda^{2}+\chi(n,p,q)\bigg],
\end{align*}
}
being
\begin{align}\label{Imp3}
\begin{cases}
\eta(n,p,q)&=(p+1)(q+1)(n-p-q-1), \\ \\
\zeta(n,p,q)&=(n-p-q-2)\left(q(2p+1)+p\right) \\ \\
\chi(n,p,q)&=pq(n-p-q-3).
\end{cases}
\end{align}
When obtaining the roots of the biquadratic equation
\begin{equation*}
\eta(n,p,q)\lambda^{4}-\zeta(n,p,q)\lambda^{2}+\chi(n,p,q)=0,
\end{equation*}
we determinate the roots of the equation $\operatorname{det}(\lambda^{2} I_{3}-\lambda A-B^{2}) = 0$, given by:
\begin{eqnarray*}
\lambda_{1,2} &=& \pm 1 \\ \\
\lambda_{3,4} &=& \pm \sqrt{\frac{\zeta(n,p,q)+\sqrt{\zeta^{2}(n,p,q)-4\eta(n,p,q)\chi(n,p,q)}}{2\eta(n,p,q)}}\\ \\
\lambda_{5,6} &=& \pm \sqrt{\frac{\zeta(n,p,q)-\sqrt{\zeta^{2}(n,p,q)-4\eta(n,p,q)\chi(n,p,q)}}{2\eta(n,p,q)}}.
\end{eqnarray*}\ \\
Using the notation
\begin{align}\label{RD}
\begin{cases}
\alpha(n,p,q)&=\frac{\zeta(n,p,q)}{2\eta(n,p,q)}, \\ \\
\gamma(n,p,q)&=\frac{\chi(n,p,q)}{\eta(n,p,q)}, \\ \\
\beta(n,p,q)&=\sqrt{\alpha^{2}(n,p,q)-\gamma(n,p,q)},
\end{cases}
\end{align}
we get, for a general caterpillar $T_{p,q} =T(p,n-3-p-q,q)$,
{\footnotesize
\begin{equation}
	\label{RGeneral}
RE(T_{p,q}) = 2\left(1 + \sqrt{\alpha(n,p,q)+\beta(n,p,q)} + \sqrt{\alpha(n,p,q)-\beta(n,p,q)}\right).
\end{equation}}

In order to obtain the extreme graphs for certain subfamilies of caterpillar of the form $T(p,n-p-q-3,q)$, for $n\geq 7$,
we consider $q$ as a function of $x$ such that $1 \le q(x) \le n-5$ for $1 \le x \le n-5$ and define
\begin{equation} \label{f*}
f(x) = \sqrt{\alpha(x)+\beta(x)}+\sqrt{\alpha(x)-\beta(x)},
\end{equation}
where $\alpha(x):=\alpha(n,x,q(x))$ and $\beta(x)=\sqrt{\alpha^{2}(x)-\gamma(x)}:=\beta(n,x,q(x))$ as in \eqref{RD}. Therefore,
{\footnotesize
\begin{eqnarray*}
f'(x) &=& \frac{1}{2} \left(
\frac{\alpha'(x)+\beta'(x)}{\sqrt{\alpha(x)+\beta(x)}}+
\frac{\alpha'(x)-\beta'(x)}{\sqrt{\alpha(x)-\beta(x)}}
\right)\\
&=& \frac{1}{2} \left(
\frac{f(x)\alpha'(x)+
	(\sqrt{\alpha(x)-\beta(x) }- \sqrt{\alpha(x)+\beta(x) })\beta'(x) }
{\sqrt{\gamma(x)}}
\right)\\
 &=&
\frac{1}{2} \left(
\frac{f^2(x)\alpha'(x)-2 \beta(x) \beta'(x) }
{f(x)\sqrt{\gamma(x)}}
\right)\\
&=&
\frac{\alpha'(x) \left(f^2(x) -2 \alpha(x)\right)  + \gamma'(x) }
     {2f(x)\sqrt{\gamma(x)}}\\
&=& \frac{2\alpha'(x) \sqrt{\gamma(x)}+ \gamma'(x)}
     {2f(x)\sqrt{\gamma(x)}},
\end{eqnarray*}}
where, $\gamma(x):=\gamma(n,x,q(x))$. So,
\begin{align}\label{Imp1}
f'(x)\geq 0 \ \ \ \ \text{if and only if} \ \ \ \ \lambda(x) := 2\alpha'(x) \sqrt{\gamma(x)}+ \gamma'(x) \geq 0.
\end{align}

Taking into account \eqref{Imp3} and \eqref{RD}, it is easy to see that $0\le\gamma(x)<1$, for all $1\leq x \leq n-5$.

Thus,
\begin{itemize}
\item[i.] If \ $\alpha^{\prime}(x)\geq 0$ \ and \ $\gamma^{\prime}(x)\leq 0$, \ for $x\in I\subset [1, n-5]$, \ then by \eqref{Imp1}
\begin{align}\label{Des2}
\gamma^{\prime}(x) \ \leq \ \lambda(x) \ < \ 2\alpha^{\prime}(x).
\end{align}

\item[ii.] If \ $\alpha^{\prime}(x)\leq 0$ \ and \ $\gamma^{\prime}(x)\geq 0$, \ for $x\in I\subset [1, n-5]$, \ then by \eqref{Imp1}
\begin{align}\label{Des3}
2\alpha^{\prime}(x) \ < \ \lambda(x) \ \leq \ \gamma^{\prime}(x).
\end{align}
\end{itemize}

Next we characterize the extremal caterpillars $T(p,n-2p-3,q)$ for three specific cases:
$q=p$, $q=n-p-b-3$ and   $q=b$, for any $b\in \{1,\ldots,n-6\}$ fixed.

\subsubsection{Extremal graphs for the family of caterpillars $\mathbf{T(p,b,n-p-b-3)}$}
\begin{center}
	\begin{tikzpicture}
		\draw [line width=1.pt] (0.,0.)-- (-1.,-2.);
		\draw [line width=1.pt] (0.,0.)-- (1.,-2.);
		\draw [line width=1.pt] (0.,0.)-- (3.,0.);
		\draw [line width=1.pt] (3.,0.)-- (2.,-2.);
		\draw [line width=1.pt] (3.,0.)-- (4.,-2.);
		\draw [line width=1.pt] (3.,0.)-- (6.,0.);
		\draw [line width=1.pt] (6.,0.)-- (5.,-2.);
		\draw [line width=1.pt] (6.,0.)-- (7.,-2.);
		\begin{scriptsize}
			\draw [fill=black] (0.,0.) circle (2.5pt);
			\draw (0.,0.3) node {$1$};
			\draw [fill=black] (-1.,-2.) circle (2.5pt);
			\draw [fill=black] (1.,-2.) circle (2.5pt);
			\draw [fill=black] (3.,0.) circle (2.5pt);
			\draw (3.,0.3) node {$2$};
			\draw [fill=black] (2.,-2.) circle (2.5pt);
			\draw [fill=black] (4.,-2.) circle (2.5pt);
			\draw [fill=black] (6.,0.) circle (2.5pt);
			\draw (6.,0.3) node {$3$};
			\draw [fill=black] (5.,-2.) circle (2.5pt);
			\draw [fill=black] (7.,-2.) circle (2.5pt);
			\draw (0.,-2.) node {$\ldots$};
			\draw(0.,-2.3) node {$\underbrace{\hspace{2cm}}$};
			\draw (0.,-2.6) node {$p$};
			\draw (3.,-2.) node {$\ldots$};
			\draw(3.,-2.3) node {$\underbrace{\hspace{2cm}}$};
			\draw (3.,-2.6) node {$b$};
			\draw(6.,-2.) node {$\ldots$};
			\draw(6.,-2.3) node {$\underbrace{\hspace{2cm}}$};
			\draw (6.,-2.6) node {$n-p-b-3$};
		\end{scriptsize}
	\end{tikzpicture}
\end{center}

\begin{theorem} \label{caseA}
	Let $T_p =T(p,b,n-p-b-3)$ be a caterpillar of order $n \ge 7$, with $b\in \{1, \ldots, n-6\}$ fixed and $p=1,\ldots,n-b-4$.
	Then
	\[
	RE(T_{1}) \le RE(T_p) \le RE\left(T_{\lfloor\frac{n-b-3}{2}\rfloor}\right).
	\]
\end{theorem}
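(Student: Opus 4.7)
The plan is to apply the general formula \eqref{RGeneral} for $RE(T_{p,q})$ with the third parameter specialized to $q = q(p) = n-p-b-3$, so that $T_p$ corresponds to $T_{p,q(p)}$ in the notation of Section~\ref{extremal_caterpillars}. After substituting $q = n-p-b-3$ into the three quantities in \eqref{Imp3} and introducing the shorthand $s := n-b-3$ and $t := t(p) = p(s-p)$, I expect elementary algebra to collapse them to the clean form
\begin{align*}
\eta(p) &= (b+2)(t+s+1), \\
\zeta(p) &= (b+1)(s+2t), \\
\chi(p) &= b\,t,
\end{align*}
so that from \eqref{RD},
\begin{equation*}
\alpha(p) = \frac{(b+1)(s+2t)}{2(b+2)(t+s+1)}, \qquad \gamma(p) = \frac{b\,t}{(b+2)(t+s+1)}.
\end{equation*}

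The crucial observation is then that $\alpha$ and $\gamma$, viewed as functions of $t$ alone (with $s$ and $b$ treated as constants), are strictly increasing: a direct computation will give
\begin{equation*}
\partial_t \alpha = \frac{(b+1)(s+2)}{2(b+2)(t+s+1)^2} > 0, \qquad \partial_t \gamma = \frac{b(s+1)}{(b+2)(t+s+1)^2} > 0.
\end{equation*}
Since $t(p) = p(s-p)$ is a downward parabola that is strictly increasing on the real interval $[1, s/2]$, the chain rule yields $\alpha'(p) \ge 0$ and $\gamma'(p) \ge 0$ on that interval. By the criterion \eqref{Imp1}, this forces $\lambda(p) = 2\alpha'(p)\sqrt{\gamma(p)} + \gamma'(p) \ge 0$, hence $f'(p) \ge 0$ on $[1, s/2]$; equivalently, $RE(T_p) = 2(1 + f(p))$ is non-decreasing there.

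To extend to the full discrete range $p \in \{1, \ldots, n-b-4\} = \{1, \ldots, s-1\}$, I would invoke the isomorphism $T(p, b, n-p-b-3) \cong T(n-p-b-3, b, p)$ obtained by relabelling the two outer pendent stars. This gives the symmetry $RE(T_p) = RE(T_{s-p})$, so the discrete function $p \mapsto RE(T_p)$ is unimodal on $\{1,\ldots,s-1\}$, attaining its minimum at the endpoints $p=1$ and $p=n-b-4$, and its maximum at the integer closest to the midpoint $s/2$, namely $p = \lfloor (n-b-3)/2 \rfloor$, which yields both bounds in the statement.

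The main obstacle I anticipate is the algebraic simplification of $\zeta(n, p, q(p))$ after the substitution; the clean form $\zeta = (b+1)(s+2t)$, together with the analogous simplifications of $\eta$ and $\chi$, is precisely what makes $\alpha$ and $\gamma$ depend on $p$ only through $t$, and without this reparameterization the signs of $\alpha'(p)$ and $\gamma'(p)$ would be considerably more tedious to analyze directly. Once the $(s,t)$ coordinates are in place, the monotonicity argument is a short consequence of the positivity of the two partial derivatives combined with \eqref{Imp1}.
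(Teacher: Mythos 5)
Your proposal is correct and follows essentially the same route as the paper's proof: substitute $q=n-p-b-3$ into \eqref{RGeneral}, verify that both $\alpha'$ and $\gamma'$ are nonnegative on $[1,\tfrac{n-b-3}{2}]$ so that \eqref{Imp1} gives monotonicity of $f$ there, and dispose of the remaining values of $p$ via the isomorphism $T(p,b,n-p-b-3)\cong T(n-p-b-3,b,p)$. Your reparameterization through $s=n-b-3$ and $t=p(s-p)$ is only a cosmetic (though pleasant) streamlining --- it reproduces exactly the derivatives $\alpha'(x)$ and $\gamma'(x)$ displayed in the paper, since $t'(p)=n-b-3-2p$ is the common sign-determining factor.
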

\begin{proof} \
	Without loss of generality, we take $1\le p\le \lfloor\frac{n-b-3}{2}\rfloor$, since for $p=1,\ldots,n-b-4$, $T_p$ and $T_{n-p-b-3}$ are isomorphic graphs. Replacing $q$ by $n-p-b-3$ in \eqref{RGeneral}, and considering the function  $f(x)$, as in  \eqref{f*},  for $1\leq x \leq \frac{n-b-3}{2}$,
	\[
	\alpha'(x) = \frac{(b+1)(n-b-1)(n-2x-b-3)}{2(b+2)\big((x+1)(n-x-b-2)\big)^{2}},\]
	
	\[\gamma'(x) = \frac{b(n-b-2)(n-2x-b-3)}{(b+2)(x+1)^2(n-x-b-2)^{2}}.\]  
	we have both $\alpha'(x)\geq 0$ and $\gamma'(x)\geq 0$ if and only if $1 \leq x \leq \frac{n-b-3}{2}$. Thus, by \eqref{Imp1}, $f$  increases in the interval  $[1, \frac{n-b-3}{2}]$ and the proof is complete.
\end{proof}

\subsubsection{Extremal graphs for the family of caterpillar $\mathbf{T(p,n-2p-3,p)}$}
\begin{center}
\begin{tikzpicture}
\draw [line width=1.pt] (0.,0.)-- (-1.,-2.);
\draw [line width=1.pt] (0.,0.)-- (1.,-2.);
\draw [line width=1.pt] (0.,0.)-- (3.,0.);
\draw [line width=1.pt] (3.,0.)-- (2.,-2.);
\draw [line width=1.pt] (3.,0.)-- (4.,-2.);
\draw [line width=1.pt] (3.,0.)-- (6.,0.);
\draw [line width=1.pt] (6.,0.)-- (5.,-2.);
\draw [line width=1.pt] (6.,0.)-- (7.,-2.);
\begin{scriptsize}
\draw [fill=black] (0.,0.) circle (2.5pt);
\draw (0.,0.3) node {$1$};
\draw [fill=black] (-1.,-2.) circle (2.5pt);
\draw [fill=black] (1.,-2.) circle (2.5pt);
\draw [fill=black] (3.,0.) circle (2.5pt);
\draw (3.,0.3) node {$2$};
\draw [fill=black] (2.,-2.) circle (2.5pt);
\draw [fill=black] (4.,-2.) circle (2.5pt);
\draw [fill=black] (6.,0.) circle (2.5pt);
\draw (6.,0.3) node {$3$};
\draw [fill=black] (5.,-2.) circle (2.5pt);
\draw [fill=black] (7.,-2.) circle (2.5pt);
\draw (0.,-2.) node {$\ldots$};
\draw(0.,-2.3) node {$\underbrace{\hspace{2cm}}$};
\draw (0.,-2.6) node {$p$};
\draw (3.,-2.) node {$\ldots$};
\draw(3.,-2.3) node {$\underbrace{\hspace{2cm}}$};
\draw (3.,-2.6) node {$n-2p-3$};
\draw (6.,-2.) node {$\ldots$};
\draw(6.,-2.3) node {$\underbrace{\hspace{2cm}}$};
\draw (6.,-2.6) node {$p$};
\end{scriptsize}
\end{tikzpicture}
\end{center}

\begin{theorem}	\label{equi}
Let $T_p =T(p,n-2p-3,p)$ be a caterpillar of order $n \ge 7$, with $p=1,\ldots, \lfloor \frac{n-4}{2} \rfloor$.
Then
	$$RE(T_{1})\le \ RE(T_{p})\le RE(T_{z}),$$   where
	 $z$ is an integer number in $I=\left[round(r), \ round(s)\right],$
	with
	\[
	r =  \frac{1}{2}\left(2n-3-\sqrt{2n(n-2)+3}\right)  \quad \hbox{ and } \quad
	s =  \frac{1}{2}\left(2(n-1)-\sqrt{2n(n-1)}\right).\]	

\end{theorem}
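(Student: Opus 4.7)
The plan is to mirror the strategy of Theorem~\ref{caseA}: substitute $q=p$ into \eqref{Imp3} and \eqref{RD}, and then apply the monotonicity criterion \eqref{Imp1} to the function $f(x)$ of \eqref{f*}. A direct substitution yields $\eta(n,x,x)=(x+1)^{2}(n-2x-1)$, $\zeta(n,x,x)=2x(x+1)(n-2x-2)$ and $\chi(n,x,x)=x^{2}(n-2x-3)$, so that
\[
\alpha(x)=\frac{x(n-2x-2)}{(x+1)(n-2x-1)}, \qquad \gamma(x)=\frac{x^{2}(n-2x-3)}{(x+1)^{2}(n-2x-1)}.
\]
Differentiating and clearing positive denominators, $\alpha'(x)$ has the sign of $(n-2x-2)(n-2x-1)-2x(x+1)$, whose unique root in $[1,\lfloor(n-4)/2\rfloor]$ is precisely $s=\tfrac{1}{2}\bigl(2(n-1)-\sqrt{2n(n-1)}\bigr)$, while $\gamma'(x)$ has the sign of $2x^{2}+(6-4n)x+(n^{2}-4n+3)$, whose root in the same interval is $r=\tfrac{1}{2}\bigl(2n-3-\sqrt{2n(n-2)+3}\bigr)$. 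Both quadratics open upward and are positive at $x=0$, so each changes sign exactly once; a short rationalization of the surds gives $1\le r<s<r+\tfrac{1}{2}\le\lfloor(n-4)/2\rfloor$ for every $n\ge 7$.

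The sign information feeds into \eqref{Imp1} region by region. On $[1,r]$ both $\alpha'$ and $\gamma'$ are non-negative, so $\lambda(x)=2\alpha'(x)\sqrt{\gamma(x)}+\gamma'(x)\ge 0$ and $f$ is non-decreasing; on $[s,\lfloor(n-4)/2\rfloor]$ both are non-positive, so $\lambda(x)\le 0$ and $f$ is non-increasing; on the short transition interval $(r,s)$ the signs are mixed, but continuity forces the global maximum of $f$ on $[1,\lfloor(n-4)/2\rfloor]$ to be attained at some $x^{*}\in[r,s]$. Via \eqref{RGeneral} the same conclusion holds for $RE(T_{p})$, so any integer maximizer $z$ lies in $[\lfloor r\rfloor,\lceil s\rceil]$; combined with $s-r<\tfrac{1}{2}$ and a brief casework on the fractional parts of $r$ and $s$, the integer interval tightens to $[\text{round}(r),\text{round}(s)]$.

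For the lower bound, monotonicity on $[1,r]$ already gives $f(1)\le f(p)$ for every integer $p\in[1,\lfloor r\rfloor]$. For $p\in[\lceil r\rceil,\lfloor(n-4)/2\rfloor]$, I would combine the decreasing behavior of $f$ on $[s,\lfloor(n-4)/2\rfloor]$ with the explicit evaluation $f(1)=\tfrac{1}{\sqrt{2}}\bigl(1+\sqrt{\tfrac{n-5}{n-3}}\bigr)$ (which follows from the closed forms of $\alpha$ and $\gamma$ at $x=1$) and the analogous closed form at the right endpoint, verifying $f(\lfloor(n-4)/2\rfloor)\ge f(1)$ by an elementary inequality handled separately for the two parities of $n$. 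The main technical obstacle I anticipate is the refinement of the integer containment from $[\lfloor r\rfloor,\lceil s\rceil]$ down to $[\text{round}(r),\text{round}(s)]$: this hinges on the sharp bound $s-r<\tfrac{1}{2}$ and a careful casework on fractional parts, and it is the only step genuinely more delicate than in the proof of Theorem~\ref{caseA}.
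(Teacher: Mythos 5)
Your proposal follows essentially the same route as the paper's proof: substitute $q=p$, reduce the signs of $\alpha'$ and $\gamma'$ to two upward-opening quadratics whose smaller roots are exactly $s$ and $r$, deduce via \eqref{Imp1} that $f$ increases on $[1,r]$ and decreases from $s$ onward, locate the maximizer in the short gap $(r,s)$ of length less than $\tfrac12$, and compare $f(1)$ with the right endpoint for the minimum. The computations check out (including your closed form for $f(1)$), and the one step you flag as delicate --- tightening the integer containment to $[\mathrm{round}(r),\mathrm{round}(s)]$ --- is precisely the step the paper itself passes over with only the observation that $s-r<0.5$.
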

\begin{proof} \
From \eqref{RGeneral}, replacing $q$ by $p$, consider (see \eqref{f*})
$f(x)$ for $1\leq x \leq \frac{n-4}{2}$.
The derivatives of $\alpha$ and $\gamma$ are
\[
\alpha'(x)
=\frac{2x^{2}-(4n-4)x+n^{2}-3n+2}{(x+1)^2(n-2x-1)^{2}}
\]
and
\[
\gamma'(x)
=\frac{2x\big(2x^{2}-(4n-6)x+n^{2}-4n+3\big)}{(x+1)^{3}(n-2x-1)^{2}}
\]
thus, we have
$\alpha'(x)\geq 0$ \ if only if \
$ 2x^{2}-(4n-4)x+n^{2}-3n+2 \geq 0$
which occurs for
$ x \le s_1$ or $x \ge s_2$ with $s_{1,2} \ = \ \frac{1}{2}\left(2(n-1)\mp \sqrt{2n(n-1)}\right).$

Similarly,
$\gamma'(x)  \ge 0$ if only if $x\big(2x^{2}-(4n-6)x+n^{2}-4n+3\big) \ge 0$, that is, for
$ 0 \le x \le r_1$ or $x \ge r_2$ with
$r_{1,2} \ = \ \frac{1}{2}\left(2n-3\mp \sqrt{2n(n-2)+3}\right).$

We have
	$
	1<r_{1}<s_{1} < \frac{n-4}{2}  < r_{2}, s_{2}.
	$
 Therefore (see \eqref{Imp1}, \eqref{Des2} and \eqref{Des3}), $f$  increases in the interval $[1, \ r_{1}]$ and decreases in $[s_{1},\frac{n-4}{2}]$.
By Bolzano's Theorem, there exists $\bar{z}\in (r_{1}, s_{1})$ such that $f'(\bar{z})=0$.
Since $s_{1}- r_{1} < 0.5$, we take $z=round(\bar{z})\in \left[round(r), \ round(s)\right]$, where $r= r_1$ and $s=s_1$.
Finally, $f(1) < f(\frac{n-4}{2})$, so $f(1)$ is the minimum of this function.
\end{proof}

\begin{example}
	A table with some values for $RE(T_{z-1})$, $RE(T_{z})$, $RE(T_{z+1})$ and  extremal caterpillars $T(p,n-3-2p, p)$ are presented below.
	
\[
	\begin{array}{cccccccc}
		n &r        &s         &z& RE(T_{z-1}) & RE(T_{z})  & RE(T_{z+1}) &  extremal\; graph\\
		\hline
		19& 4.762261& 4.923303 &5 & 5.388854 & 5.406881 & 5.363498 & T(5,6,5) \\
		21& 5.349028& 5.508623 &5 & 5.421848 & 5.458735 & 5.455208 & T(5,8,5)  \\
		35& 9.453171& 9.607378 &10& 5.672191 & 5.672395 & 5.662869 & T(10,12,10) \\
		50& 13.84816&14.000000 &14& 5.768798 & 5.770057 & 5.768229 & T(14,19,14)
	\end{array}
\]

\end{example}	

\subsubsection{Extremal graphs of the family of caterpillars $\mathbf{T(p,n-p-b-3,b)}$}
\begin{center}
\begin{tikzpicture}
\draw [line width=1.pt] (0.,0.)-- (-1.,-2.);
\draw [line width=1.pt] (0.,0.)-- (1.,-2.);
\draw [line width=1.pt] (0.,0.)-- (3.,0.);
\draw [line width=1.pt] (3.,0.)-- (2.,-2.);
\draw [line width=1.pt] (3.,0.)-- (4.,-2.);
\draw [line width=1.pt] (3.,0.)-- (6.,0.);
\draw [line width=1.pt] (6.,0.)-- (5.,-2.);
\draw [line width=1.pt] (6.,0.)-- (7.,-2.);
\begin{scriptsize}
\draw [fill=black] (0.,0.) circle (2.5pt);
\draw (0.,0.3) node {$1$};
\draw [fill=black] (-1.,-2.) circle (2.5pt);
\draw [fill=black] (1.,-2.) circle (2.5pt);
\draw [fill=black] (3.,0.) circle (2.5pt);
\draw (3.,0.3) node {$2$};
\draw [fill=black] (2.,-2.) circle (2.5pt);
\draw [fill=black] (4.,-2.) circle (2.5pt);
\draw [fill=black] (6.,0.) circle (2.5pt);
\draw (6.,0.3) node {$3$};
\draw [fill=black] (5.,-2.) circle (2.5pt);
\draw [fill=black] (7.,-2.) circle (2.5pt);
\draw (0.,-2.) node {$\ldots$};
\draw(0.,-2.3) node {$\underbrace{\hspace{2cm}}$};
\draw (0.,-2.6) node {$p$};
\draw (3.,-2.) node {$\ldots$};
\draw(3.,-2.3) node {$\underbrace{\hspace{2cm}}$};
\draw (3.,-2.6) node {$n-p-b-3$};
\draw(6.,-2.) node {$\ldots$};
\draw(6.,-2.3) node {$\underbrace{\hspace{2cm}}$};
\draw (6.,-2.6) node {$b$};
\end{scriptsize}
\end{tikzpicture}
\end{center}

\begin{theorem}\label{TRD}
Let $T_p =T(p,n-p-b-3,b)$ be a caterpillar of order $n \ge 7$, with $b\in \{1, \ldots, n-6\}$ fixed and $p=1,\ldots, n-b-4$.
Then,
\[
RE(T_{n-b-4}) \leq \ RE(T_{p}) \leq RE(T_{ z}), \hbox{ where }   z\in I=\left[round(r), \ round(s)\right],
\]
with
\begin{equation}	\label{p0}
r = -\big(n-b-1\big) + \sqrt{2(n-b-1)(n-b-2)}
\end{equation}
and
{\footnotesize
\begin{equation}  \label{p1}
s=  \frac{1}{b} \bigg(-\big((b+1)(n-b)-1\big)+\sqrt{(b+1)(n-b-1)\big((2b+1)(n-1)-2b^{2}\big)}\bigg).
\end{equation}}
\end{theorem}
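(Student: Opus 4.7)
The plan is to adapt the continuous-variable optimization scheme of Theorems~\ref{caseA} and~\ref{equi} to the present situation, where the third parameter of the caterpillar is fixed at $b$ and only the first parameter varies. Setting $q = b$ in the generic formulas \eqref{RD}, I obtain $\alpha(x) = \alpha(n,x,b)$ and $\gamma(x) = \gamma(n,x,b)$ as explicit rational functions of a continuous variable $x \in [1, n-b-4]$, with $RE(T_p) = 2(1 + f(p))$ and $f$ defined as in \eqref{f*}. By the criterion \eqref{Imp1}, the monotonicity of $f$ is controlled by the sign of $\lambda(x) := 2\alpha'(x)\sqrt{\gamma(x)} + \gamma'(x)$.

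Differentiating the explicit rational expressions for $\alpha(x)$ and $\gamma(x)$ and clearing denominators, the numerators of $\alpha'(x)$ and $\gamma'(x)$ become quadratic polynomials in $x$ whose larger roots coincide precisely with $s$ in \eqref{p1} and $r$ in \eqref{p0}, respectively (the corresponding smaller roots being negative and thus irrelevant). After verifying the chain $1 < r < s < n-b-4$, the admissible interval $[1, n-b-4]$ splits into three pieces. On $[1, r]$ both $\alpha'(x) \geq 0$ and $\gamma'(x) \geq 0$, so $\lambda(x) \geq 0$ and $f$ is increasing. On $[s, n-b-4]$ both derivatives are non-positive, so $\lambda(x) \leq 0$ and $f$ is decreasing. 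On the transition interval $[r, s]$, the derivatives $\alpha'(x)$ and $\gamma'(x)$ have opposite signs, and the inequalities \eqref{Des2}--\eqref{Des3} sandwich $\lambda(x)$ between them, so $\lambda$ passes continuously from a non-negative value at $x = r$ (where $\gamma'(r) = 0$) to a non-positive value at $x = s$ (where $\alpha'(s) = 0$). Bolzano's Theorem then supplies $\bar z \in (r, s)$ with $\lambda(\bar z) = 0$, and this is the unique continuous maximizer of the unimodal function $f$; the integer maximizer of $RE(T_p)$ is accordingly $z = round(\bar z) \in [round(r), round(s)]$.

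For the lower bound, the decreasing behaviour of $f$ on $[s, n-b-4]$ already gives $f(n-b-4) \leq f(p)$ for every $p \in [s, n-b-4]$; it remains to compare the boundary values $f(1)$ and $f(n-b-4)$ at the two ends of the unimodal shape, which is done directly using \eqref{f*} together with the closed-form expressions for $\alpha$ and $\gamma$ at $x=1$ and $x = n-b-4$, yielding $f(n-b-4) \leq f(1)$ and therefore identifying $T_{n-b-4}$ as the minimizer. The main obstacle is the algebraic verification of the chain $1 < r < s < n-b-4$ and of the endpoint comparison $f(n-b-4) \leq f(1)$, uniformly in the ranges $n \geq 7$ and $1 \leq b \leq n-6$; both reduce to polynomial inequalities complicated by the square-root expressions appearing in \eqref{p0} and \eqref{p1}, requiring careful but routine algebraic bookkeeping.
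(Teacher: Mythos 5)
Your proposal is correct and follows essentially the same route as the paper's own proof: substitute $q=b$ into \eqref{RGeneral}, compute $\alpha'$ and $\gamma'$ whose quadratic numerators have larger roots $s$ and $r$ respectively, deduce via \eqref{Imp1}, \eqref{Des2} and \eqref{Des3} that $f$ increases on $[1,r]$ and decreases on $[s,n-b-4]$ with a critical point $\bar z\in(r,s)$ by Bolzano, and finish with the endpoint comparison $f(n-b-4)\le f(1)$. The only cosmetic difference is that you make explicit the sign evaluation of $\lambda$ at $x=r$ and $x=s$, which the paper leaves implicit.
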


\begin{proof} \
Replacing $q$ by $b$ in \eqref{RGeneral},
we define  $f(x)$ as in \eqref{f*}  for $1\leq x \leq n-b-4$.
We compute
$$
\alpha'(x) = \frac{-bx^{2}-2\big((b+1)(n-b)-1\big)x+(b+1)n^{2}-(b+1)(2b+3)n+b(b+2)^{2}+2}{2(b+1)\big((x+1)(n-x-b-1)\big)^{2}},$$

$$\gamma'(x) = \frac{b\big(-x^{2}-2(n-b-1)x+n^{2}-2(b+2)n+(b+3)(b+1)\big)}{(b+1)\big((x+1)(n-x-b-1)\big)^{2}}.
$$
We have $\alpha'(x)\geq 0$  if only if
$s_{1} \le x \le s_{2}$ with
$$
s_{1,2} =  \frac{1}{b} \bigg(-\big((b+1)(n-b)-1\big)\mp \sqrt{(b+1)(n-b-1)\big((2b+1)(n-1)-2b^{2}\big)}\bigg),
$$
and $\gamma'(x)\geq 0$ if only if
$r_{1}\le x \le r_{2}$ with
\begin{equation*}
r_{1,2} = -\big(n-b-1\big) \mp \sqrt{2(n-b-1)(n-b-2)}.
\end{equation*}
We  have $ 1 < r_{2} < s_{2} <  n-b-4.$ So, for $s=s_2$ and $r=r_2$, we get that (see \eqref{Imp1}, \eqref{Des2} and \eqref{Des3})
$f$ is increasing in   $[1, r]$ and decreasing in $[s, n-b-4]$. Therefore, there exists $\bar{z}\in (r, s)$ such that $f'(\bar{z})=0$. So, we take $z=round(\bar{z})\in I=\left[round(r), \ round(s)\right]$.
Furthermore, $f(n-b-4) < f(1)$, which complets the proof.
\end{proof}

\begin{example}
To obtain the maximal Randi\'c energy caterpillar graphs $T(p_1,p_2,p_3)$ of order $n=33$, we apply Theorems $5$, $6$ and $7$, for slight different values  of $b$, shown in the following table.

\[
\begin{array}{ccccccccc}
Theorem         & b & r      &s         &z& RE(T_{z})      &  extremal\; graph\\
\hline
\ref{caseA}     & 12&        &          &9 &5.653986727    & T( 9,12 ,9) \\
\ref{caseA}     & 9 &        &          &10&5.639354482    & T( 10,9 ,11) \\
\ref{equi}      & 9 &8.867059& 9.021749 &9 & 5.653986727   & T( 9,12 ,9) \\
\ref{TRD}       & 9 &8.811947& 9.031236 &9 & 5.653986727   & T( 9,12 ,9) \\
\ref{TRD}       & 8 &9.226495& 9.469988 &9 & 5.652375900   & T( 9,13 ,8) \\
\ref{TRD}       & 10&8.397368& 8.597041 &9 & 5.651878107   & T( 8,12 ,10) \\
\end{array}
\]

\end{example}

\begin{remark}
		{\rm In Theorem \ref{TRD}, we find a estimated interval
		$$I=\left[round(r), \ round(s)\right],$$
		where $r$ and $s$ are given in \eqref{p0} and \eqref{p1}, respectively, which contains the value of $z$ that maximizes Randi\'c energy for the family of caterpillars $T_p =T(p,n-p-b-3,b)$, $n \ge 7$, with $b\in \{1, \ldots, n-6\}$ fixed, for each $p=1,\ldots, n-b-4$. In this case, we want to point out that the interval $I$ does not necessarily has range less than $1$.  In fact, that interval have length less than $1$ if and only if
		\begin{equation} \nonumber 
		 g(n,b)=8(n+b-1)^{2}(n-b-1)(n-b-2)-(3n^{2}-3bn-9n+2b+6)^{2}>0,
		\end{equation}
and this function $g(n,b)$ can be written as:
\begin{eqnarray*}
	g(n,b)
	&=& 8(n+b-1)^2(n-b-1)(n-b-2)-\Big(3(n-1)(n-2)-(3n-2)b\Big)^2.
\end{eqnarray*}
Since $n-b-1>n-b-2>0$ then  $g(n,b)>h(n,b)$, with
\begin{eqnarray*}
	h(n,b) &=& 8(n+b-1)^2(n-b-2)^2-\Big(3(n-1)(n-2)-(3n-2)b\Big)^2\\
	&=& \Big(\sqrt{8}(n+b-1)(n-b-2)-3(n-1)(n-2)+(3n-2)b\Big)\\
	&\times & \Big(\sqrt{8}(n+b-1)(n-b-2)+3(n-1)(n-2)-(3n-2)b\Big).
\end{eqnarray*}
For each $n$, we find the values of $b$ such that
\begin{eqnarray*}
	\Delta_1 &=& \sqrt{8}(n+b-1)(n-b-2)-3(n-1)(n-2)+(3n-2)b>0, \mbox{ and, }\\
	\Delta_2 & = & \sqrt{8}(n+b-1)(n-b-2)+3(n-1)(n-2)-(3n-2)b>0.
\end{eqnarray*}
Taking into account that $3n-2>3(n-1)>0$, then
\begin{eqnarray*}
	\Delta_1 &>& \sqrt{8}(n+b-1)(n-b-2)-3(n-1)(n-2)+3(n-1)b\\
	&=& \Big(\sqrt{8}b-(3-\sqrt{8})(n-1)\Big)(n-b-2).
\end{eqnarray*}

Since
$$\sqrt{8}b-(3-\sqrt{8})(n-1)>0\Leftrightarrow b>\frac{(3-\sqrt{8})}{\sqrt{8}}(n-1)\simeq 0.06066(n-1),$$
for such values of $b$, $\Delta_1>0.$
Now, let us show that
$$\Delta_2 =\sqrt{8}(n+b-1)(n-b-2)-(3n-2)b+3(n-1)(n-2)>0.$$
From
$$-(3n-2)b+3(n-1)(n-2)>0 \Leftrightarrow  b<\frac{3(n-1)(n-2)}{3n-2}$$
and
$$ n-6<\frac{3(n-1)(n-2)}{3n-2} \Leftrightarrow 3n^3-20n+12<3(n^2-3n+2)\Leftrightarrow 6<11 n\quad \mbox{ (which is true),}$$
it follows that $\Delta_2>0$, for \ $1\le b\le n-6<\frac{3(n-1)(n-2)}{3n-2}.$
\\
From the above, for $n\ge 7$ and $b\in\mathbb{N}$ such that \;$0.06066(n-1)\le b\le n-6$,
$$
g(n,b)>h(n,b)=\Delta_1\,\Delta_2>0.
$$
Given $n\ge 7$, consider $b_{\min}$ the smallest integer $b\ge 1$ such that $g(n,b)>0$ and let $b^*=0.06066(n-1)$. For different values of $n$, $b^*$ remais close to the exact value $b_{\min}$:
$$
\begin{array}{r|r|c}
	n    & b_{\min} & b^*\\
	\hline
	20   & 1   & 1.1525\\
	30   & 2   & 1.7591\\
	50   & 3   & 2.9723\\
	100  & 6   & 6.0053\\
	500  & 30  & 30.269\\
	1000 & 61  &  60.599\\
	5000 & 303 & 303.24\\
	10000 & 606 & 606.54\\
	20000 & 1213 & 1213.1\\
\end{array}
$$
}
\end{remark}

\bigskip

\noindent \textbf{Acknowledgements:}
This research is partially supported by the Portuguese Foundation for Science
and Technology (\textquotedblleft FCT-Funda\c c\~ao para a Ci\^encia e a Tecnologia \textquotedblright),
through the CIDMA - Center for Research and Development in Mathematics and Applications, within project
UID/MAT/04106/2013. The research of R. C. D\'iaz was supported by Conicyt-Fondecyt de Postdoctorado 2017 {\rm $N^{o}$} 3170065, Chile.

\bigskip
\begin{flushleft}
\textbf{Domingos M. Cardoso} \\
\texttt{Department of Mathematics} \\
\texttt{Universidade de Aveiro} \\
\texttt{Aveiro, Portugal} \\
\texttt{E-mails}: \textit{dcardoso@ua.pt}
\end{flushleft}

\bigskip
\begin{flushleft}
\textbf{Paula Carvalho} \\
\texttt{Department of Mathematics} \\
\texttt{Universidade de Aveiro} \\
\texttt{Aveiro, Portugal} \\
\texttt{E-mails}: \textit{paula.carvalho@ua.pt}
\end{flushleft}

\bigskip
\begin{flushleft}
\textbf{Roberto C. D\'{i}az} \\
\texttt{Departamento de Matem\'aticas}, \\
\texttt{Universidad de La Serena} \\
\texttt{La Serena, Chile} \\
\texttt{E-mails}: \textit{roberto.diazm@userena.cl}
\end{flushleft}

\bigskip
\begin{flushleft}
\textbf{Paula Rama} \\
\texttt{Department of Mathematics} \\
\texttt{Universidade de Aveiro} \\
\texttt{Aveiro, Portugal} \\
\texttt{E-mails}: \textit{prama@ua.pt }
\end{flushleft}

\end{document}